\documentclass[12pt]{article}

\usepackage{dsfont}
\usepackage{amsthm,amsmath,amssymb}
\usepackage{enumerate}
\usepackage{enumerate}
\usepackage{indentfirst}
\setlength{\parindent}{2em}
\newcommand{\qbc}[2]{\left[\genfrac{}{}{0pt}{}{#1}{#2}\right]}

\theoremstyle{plain}
\newtheorem{theorem}{Theorem}[section]
\newtheorem{lemma}[theorem]{Lemma}

\newtheorem{example}[theorem]{Example}

\title{A note on the distribution of major index for Schr\"{o}der paths}
\date{}

\author{Xiaomei Chen}
\begin{document}
\maketitle


\begin{abstract}
Bonin, Shapiro and Simion (1993) gave two formulas on the distribution of major index for Schr\"{o}der paths, and proved their result for the case $E<D<N$. In this short note, we correct an error in their proof, and give a complete proof for all cases.
\end{abstract}

\section{Introduction}

For the notation and terminology below on lattice paths, see \cite{Bonin-Shapiro-Simion}. Let $Del(m,n,l)$ denote the set of Delannoy paths from $(0,0)$ to $(m,n)$ with $l$ steps, where a Delannoy path is a lattice path using only the three steps (1,0), (1,1) and (0,1). A Schr\"{o}der $n-$path is a Delannoy path from $(0,0)$ to $(n,n)$ which never goes above the diagonal line $y=x$, and we use $Sch_{L}(n,l)$ to denote the collection of all Schr\"{o}der $n-$paths with $l$ steps.

In the following, we use $E$, $D$ and $N$ to denote the three steps (1,0), (1,1) and (0,1) respectively, and represent a Delannoy path of length $l$ by a word $W=w_{1}w_{2}\cdots w_{l}$ over the alphabet set $\{E, D, N\}$. Given a linear ordering of $\{E, D, N\}$, $i (1\leq i\leq n-1)$ is called a descent of $W$ if $w_{i}>w_{i+1}$. We use $D(W)$ to denote the set of all descents of $W$, and define the major index of $W$ by $\mathrm{maj}(W):=\sum_{i\in D(W)}i$.

Bonin et al. studied the major index for Schr\"{o}der paths and gave the following result.
\begin{theorem}\cite{Bonin-Shapiro-Simion}
\label{thm:11}
For given $n$, $l$ and linear ordering of $\{E, D, N\}$, the distribution of the statistic maj over $Sch_{L}(n,l)$ is
\begin{equation}
\begin{aligned}
MSch_{L}(n,l;q):&=\sum_{W\in Sch_{L}(n,l)}q^{\mathrm{maj}(W)}\\
&=\frac{1}{[l-n+1]}\qbc{2(l-n)}{l-n}\qbc{l}{2n-l},\,if \,E<N,
\end{aligned}
\end{equation}
and
\begin{equation}
\begin{aligned}
MSch_{L}(n,l;q):&=\sum_{W\in Sch_{L}(n,l)}q^{\mathrm{maj}(W)}\\
&=\frac{q^{l-n}}{[l-n+1]}\qbc{2(l-n)}{l-n}\qbc{l}{2n-l},\,if \,E>N.
\end{aligned}
\end{equation}
\end{theorem}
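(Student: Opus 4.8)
The plan is to separate the diagonal steps from the underlying Dyck structure and to reduce the whole computation to two classical facts about the major index. Write $k=l-n$ and $m=2n-l$, so that every $W\in Sch_{L}(n,l)$ has exactly $k$ letters $E$, $k$ letters $N$ and $m$ letters $D$, with $l=2k+m$. First I would record the structural observation that the steps $D$ play no role in the defining inequality $y\le x$: at every prefix the height condition reads $\#N\le\#E$, which involves only $E$ and $N$. Hence deleting all $D$'s from $W$ yields a Dyck word $P=p_{1}\cdots p_{2k}$ on $\{E,N\}$ that stays weakly below the diagonal, and $W$ is recovered from $P$ by choosing how to interleave the $m$ copies of $D$. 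This partitions $Sch_{L}(n,l)$ into fibers indexed by the Dyck words $P$.

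The key tool is an insertion lemma: for any word $u$ of length $a$ and any letter $c$ not occurring in $u$, the sum of $q^{\mathrm{maj}}$ over all $\binom{a+b}{b}$ interleavings of $u$ with $b$ copies of $c$ equals $q^{\mathrm{maj}(u)}\qbc{a+b}{b}$. I would derive this from its one–step form: inserting a single new letter $c$ into the $a+1$ gaps of $u$ makes the increments $\mathrm{maj}(u^{(j)})-\mathrm{maj}(u)$ run bijectively over $\{0,1,\dots,a\}$ as the gap $j$ varies, and the full lemma then follows by inserting the $b$ copies one at a time, the $[b]!$-fold overcount from runs of the identical $c$ producing the $q$-binomial denominator. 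The crucial point for us is that this statement is insensitive to where $c$ sits in the linear order, since it holds for \emph{any} new letter. Applying it with $u=P$ and $c=D$ gives, for each fiber,
\[
\sum_{W\mapsto P} q^{\mathrm{maj}(W)} = q^{\mathrm{maj}(P)}\qbc{l}{m},
\]
where $\mathrm{maj}(P)$ is computed in the two–letter alphabet and so depends only on the relative order of $E$ and $N$. This is exactly what forces the answer to depend only on whether $E<N$ or $E>N$, and it is where I expect the original argument to have slipped.

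Summing the displayed identity over all Dyck words $P$ reduces everything to the two–letter problem. For $E<N$ I would invoke the classical MacMahon evaluation $\sum_{P}q^{\mathrm{maj}(P)}=\frac{1}{[k+1]}\qbc{2k}{k}$, the $q$-Catalan number; together with the fiber identity this yields the first formula. For $E>N$ I would prove the elementary shift $\mathrm{maj}_{E>N}(P)-\mathrm{maj}_{E<N}(P)=k$, valid for every word with $k$ letters $E$ and $k$ letters $N$ that ends in $N$ (as all Dyck words do): writing $g_{i}=+1$ for $E$ and $g_{i}=-1$ for $N$, Abel summation gives $\tfrac{1}{2}\sum_{i}i(g_{i}-g_{i+1})=\tfrac{1}{2}\bigl(\sum_{i}g_{i}-2k\,g_{2k}\bigr)=\tfrac{1}{2}(0+2k)=k$. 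Hence $\sum_{P}q^{\mathrm{maj}_{E>N}(P)}=q^{k}\frac{1}{[k+1]}\qbc{2k}{k}$, and the fiber identity promotes this to the claimed factor $q^{\,l-n}$.

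The main obstacle is the insertion lemma, and specifically the bookkeeping in its one–step form for a letter $c$ placed \emph{anywhere} in the order: one must check that the creation of the two new adjacencies $(u_{j},c)$ and $(c,u_{j+1})$, the destruction of the old adjacency $(u_{j},u_{j+1})$, and the unit right–shift of every descent past position $j$ combine so that the increments are precisely a permutation of $\{0,\dots,a\}$; once this is in hand the equal–letter overcount in the iteration is routine. I would close by noting that, since both the fiber identity and the two–letter evaluations are independent of the position of $D$ in the order, the two displayed formulas account for all six linear orderings, completing the proof in every case.
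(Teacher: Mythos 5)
Your overall route is sound and genuinely different from the paper's: you fiber $Sch_{L}(n,l)$ over two-letter Dyck words by deleting the $D$'s, evaluate each fiber by a shuffle identity, and finish with the classical $q$-Catalan evaluation together with your Abel-summation shift $\mathrm{maj}_{E>N}(P)=\mathrm{maj}_{E<N}(P)+(l-n)$, which is correct for words ending in $N$ (as all these Dyck words do). The paper instead repairs the Bonin--Shapiro--Simion reflection, constructing a maj-controlled bijection $BDel(n,n,l)\rightarrow Del(n+1,n-1,l)$ (Lemma~\ref{lem:21}) and subtracting, using only MacMahon's multiset formula. However, your derivation of the key insertion lemma has a genuine gap: the one-step increment lemma (increments a permutation of $\{0,1,\dots,a\}$) is valid only when the inserted letter does \emph{not} already occur in the word, so it cannot be iterated as you propose. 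Concretely, take $E<D<N$ and insert a second $D$ into $u'=DN$ (which has $\mathrm{maj}=0$): the three gaps produce $DDN$, $DDN$, $DND$ with major indices $0,0,2$, so the increments are $\{0,0,2\}$, not $\{0,1,2\}$, and no factor $[a+2]$ materializes at the second step. The claimed ``$[b]!$-fold overcount'' is also wrong: in this example the sum of $q^{\mathrm{maj}}$ over ordered insertion sequences is $2+2q+2q^{2}=2!\,\qbc{3}{2}$, a plain $b!$ overcount with no $q$-weighting, and it does not equal $[a+1][a+2]=(1+q)(1+q+q^{2})$. So the arithmetic $[a+1]\cdots[a+b]/[b]!=\qbc{a+b}{b}$ has no derivation behind it, even though the target identity happens to be true.

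The insertion lemma itself is a known theorem: it is the case $v=D^{m}$ of the Garsia--Gessel shuffle theorem, which states that for words $u,v$ on disjoint alphabets the sum of $q^{\mathrm{maj}}$ over all shuffles equals $q^{\mathrm{maj}(u)+\mathrm{maj}(v)}\qbc{a+b}{b}$ --- and, crucially, it holds for any relative position of $D$ in the linear order, which is exactly what your remark about covering all six orderings requires. So your proof is repaired either by citing that theorem or by adapting the partition-in-a-box bijection from the standard proof of MacMahon's multiset formula to a letter in arbitrary relative position (the textbook version treats only the largest letter). With that patch in place, the rest of your argument checks out --- the fiber decomposition, the evaluation $\sum_{P}q^{\mathrm{maj}(P)}=\frac{1}{[k+1]}\qbc{2k}{k}$ for $E<N$ with $k=l-n$, the shift by $k$ for $E>N$, and the final assembly with the factor $\qbc{l}{2n-l}$ --- yielding a proof that avoids both the bad-path bijection and the subtraction step of the paper, at the cost of importing a stronger classical tool.
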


Bonin et al. gave a detailed proof of the above result for the case $E<D<N$, and omitted proof of other cases. In their proof, a path $p\in Del(n,n,l)$ is called a 'bad' one if it runs above the line $y=x$, and a correspondence $\psi$ from the set consisting of all bad paths in $Del(n,n,l)$ to the set $Del(n+1,n-1,l)$ is defined as follows.

For a 'bad' path $W=w_1w_2\cdots w_l\in Del(n,n,l)$, let $w_i$ be the first step of $W$ running above the line $y=x$, and let $w_j$ be the last element of the sequence of consecutive $N$ beginning at $w_i$. Then $\psi(W)$ is defined to be the path obtained from $W$ by replacing $w_j$ with $E$. The correctness of the proof given in \cite{Bonin-Shapiro-Simion} relies on the statement that $\psi$ is a bijection, which however is not true. See the following example for instance.

\begin{example}
Let $w=NENNEE$ and $w'=EENNNE$ be two bad paths in $Del(3,3,6)$. Then $\psi(w)=\psi(w')=EENNEE$.
\end{example}

\section{Proof of Theorem \ref{thm:11}}
Let $BDel(m,n,l)$ denote the set of all 'bad' paths in $Del(m,n,l)$. Given a linear ordering of $\{E,D,N\}$, we define
$$MDel(m,n,l;q):=\sum_{W\in Del(m,n,l)}q^{\mathrm{maj}(W)}$$
and
$$MBDel(m,n,l;q):=\sum_{W\in BDel(m,n,l)}q^{\mathrm{maj}(W)}$$
to be the distributions of the maj statistic over $Del(m,n,l)$ and $BDel(m,n,l)$ respectively.

For a path $W=w_1w_2\cdots w_l \in Del(m,n,l)$, the depth of $w_i(1\leq i\leq l)$ is defined to be the difference between the number of $N$ and the number of $E$ in the subpath $w_1w_2\cdots w_i$. By extending the technique applied to Catalan paths in \cite{F¨¹rlinger-Hofbauer}, we obtain the following result.

\begin{lemma}
\label{lem:21}
For given $n,l$ and linear ordering of $\{E,D,N\}$, we have
\begin{equation}
\begin{aligned}
MBDel(n,n,l;q)=qMDel(n+1,n-1,l;q),\,if \,E<N,
\end{aligned}
\end{equation}
and
\begin{equation}
\begin{aligned}
MBDel(n,n,l;q)=MDel(n+1,n-1,l;q),\,if \,E>N.
\end{aligned}
\end{equation}
\end{lemma}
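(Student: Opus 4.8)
The plan is to reduce the three-letter identity to a two-letter (Catalan-type) statement by erasing the $D$-steps, and then to repair the flawed map $\psi$ at the two-letter level by tracking the \emph{global} maximum depth rather than a single excursion above the diagonal.

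First I would record that the $\mathrm{maj}$-generating function factors according to the $D$-steps. Write $d=2n-l$ for the number of $D$'s, and for a path $W$ let $\overline{W}$ denote the $\{E,N\}$-subword obtained by erasing its $D$'s; since $D$ leaves the depth unchanged, $W$ is bad if and only if $\overline{W}$ is bad. The key claim is that for every fixed two-letter word $u$,
\begin{equation*}
\sum_{W:\,\overline{W}=u} q^{\mathrm{maj}(W)}=q^{\mathrm{maj}(u)}\qbc{l}{d},
\end{equation*}
the sum ranging over the insertions of $d$ copies of $D$ into $u$. This is precisely where the placement of $D$ in the linear order becomes irrelevant: the right-hand side does not see it, which is why only the relative order of $E$ and $N$ survives in the final formula. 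I would prove this by a standard induction on $d$ using the recursion for the Gaussian binomial (or recognize it as the shuffle identity for $\mathrm{maj}$), the base case $d=0$ being trivial. Summing this over the appropriate families of $u$ then yields $MBDel(n,n,l;q)=\qbc{l}{d}\,B(q)$ and $MDel(n+1,n-1,l;q)=\qbc{l}{d}\,A(q)$, where, with $a=l-n$, $B(q)$ is the $\mathrm{maj}$-generating function of the two-letter \emph{bad} paths from $(0,0)$ to $(a,a)$ and $A(q)$ that of \emph{all} two-letter paths from $(0,0)$ to $(a+1,a-1)$. The lemma reduces to $B(q)=q^{\epsilon}A(q)$, with $\epsilon=1$ if $E<N$ and $\epsilon=0$ if $E>N$.

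For the two-letter statement I would define a corrected bijection $\phi$ from bad $(a,a)$-paths to all $(a+1,a-1)$-paths. Given a bad word $W$, let $M\ge 1$ be its maximum depth and let $p$ be the \emph{first} position attaining depth $M$; then necessarily $w_p=N$, $w_{p+1}=E$, and (when $p>1$) $w_{p-1}=N$. Define $\phi(W)$ by replacing $w_p$ with $E$. This is the repair of $\psi$: the map $\psi$ alters an $N$ inside the \emph{first} excursion above the diagonal, so two paths differing only in later, higher excursions can collide, whereas the first occurrence of the \emph{global} maximum is a recoverable invariant. Indeed, in $\phi(W)$ the maximum depth equals $M-1$ and is attained \emph{last} at position $p-1$; the inverse therefore locates the last occurrence $j$ of the maximum depth of a given $(a+1,a-1)$-path and replaces the $E$ at position $j+1$ by $N$. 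Checking that these are mutually inverse and land in the correct sets shows $\phi$ is a bijection. (On the stated example this gives $NENNEE\mapsto NENEEE$ and $EENNNE\mapsto EENNEE$, now distinct.)

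Finally I would track $\mathrm{maj}$, noting that only the pairs $(w_{p-1},w_p)$ and $(w_p,w_{p+1})$ can change. If $E<N$, the descent at $p$ coming from $(N,E)$ is destroyed (losing $p$) while one at $p-1$ is created from $(N,E)$ (gaining $p-1$), a net change of $-1$ (and also $-1$ when $p=1$); if $E>N$, neither pair is ever a descent before or after, so $\mathrm{maj}$ is unchanged. Hence $B(q)=qA(q)$ for $E<N$ and $B(q)=A(q)$ for $E>N$, and multiplying by $\qbc{l}{d}$ delivers the two forms of the lemma. I expect the main obstacle to be the factorization lemma: establishing cleanly that the $D$-insertions contribute the order-independent factor $\qbc{l}{d}$ is what makes the argument uniform in the placement of $D$, and it is the step requiring the most care. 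By contrast, once the global maximum is identified as the correct invariant, verifying that $\phi$ is a bijection and tracking $\mathrm{maj}$ through the single altered letter are routine.
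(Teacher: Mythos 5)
Your proposal is correct, but it takes a genuinely different route from the paper's. The core bijective idea is in fact the same: like the paper, you repair $\psi$ by locating the \emph{first} position at which the global maximum depth is attained (necessarily an $N$, followed by an $E$, and preceded by an $N$ when it is not the first step), replacing that $N$ by $E$, and inverting via the \emph{last} occurrence of the maximum depth of the image path, with the same convention that $w_0$ has depth $0$; your depth bookkeeping (maximum drops from $M$ to $M-1$, attained last at $p-1$) and your $\mathrm{maj}$ accounting, including the $p=1$ case, are correct, and your map does separate the paper's colliding pair. Where you diverge is the treatment of the $D$-steps. The paper keeps them in place and splits the six linear orderings into two groups: for $E<D<N$, $N<E<D$, $D<N<E$ the single-letter swap suffices, while for $E<N<D$, $D<E<N$, $N<D<E$ it must also rearrange the maximal $D$-block around the deepest step ($W_1\mapsto \underbrace{D\cdots D}_{r-1}E\underbrace{D\cdots D}_{s+1}$, etc.) to keep $\mathrm{maj}$ under control; this is elementary and self-contained but costs a case analysis over the orderings. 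You instead erase the $D$'s via the factorization $\sum_{\overline{W}=u}q^{\mathrm{maj}(W)}=q^{\mathrm{maj}(u)}\qbc{l}{d}$ with $d=2n-l$ (note that paths in $Del(n,n,l)$ and in $Del(n+1,n-1,l)$ contain the same number $d$ of $D$'s, so the factor cancels), which makes the irrelevance of $D$'s position in the ordering conceptually automatic and reduces the lemma to the pure two-letter Catalan-type bijection in the spirit of F\"urlinger and Hofbauer. The price is that this factorization --- a special case of the Garsia--Gessel shuffle theorem, applied to shuffles of $u$ with the constant word $D^d$ --- is the one genuinely nontrivial ingredient, as you yourself flag: your suggested induction on $d$ needs care because inserting indistinguishable $D$'s one at a time overcounts each word, so it is cleaner to induct on the length of $u$ using the $q$-Pascal recursion for $\qbc{l}{d}$, or to standardize the $D$'s and invoke the shuffle theorem directly. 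In short: the paper buys self-containedness at the cost of a sixfold case analysis and block surgery; you buy uniformity in the ordering at the cost of importing (or proving) a classical shuffle identity.
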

\begin{proof}
We prove Lemma $\ref{lem:21}$ by constructing a bijection
$$\varphi: BDel(n,n,l)\rightarrow Del(n+1,n-1,l)$$
as follows. Given a path $W=w_1w_2\cdots w_l \in BDel(n,n,l)$, let $w_k$ be its first deepest step. We denote by
$$W_1:=w_{k-r}w_{k-r+1}\cdots w_{k-1}w_kw_{k+1}\cdots w_{k+s}$$
the subpath of $W$ such that $w_{k-r-1}\neq D$, $w_{k+s+1}\neq D$, and $w_{k-i}=w_{k+j}=D$ for $1\leq i\leq r$ and $1\leq j\leq s$. Then $\varphi(W)$ is defined as follows.
\begin{enumerate}[(1)]
  \item If $E<D<N$, $N<E<D$ or $D<N<E$, $\varphi(W)$ is obtained from $W$ by replacing $w_k$ with $E$.
  \item If $E<N<D$, $D<E<N$ or $N<D<E$, $\varphi(W)$ is obtained from $W$ according to the following two cases:
  \begin{itemize}
    \item when $r\geq 1$, replacing $W_1$ with the path $\underbrace{DD\cdots D}_{r-1}E\underbrace{DD\cdots D}_{s+1}$;
    \item when $r=0$, replacing $W_1$ with the path $\underbrace{DD\cdots D}_{s}E$.
  \end{itemize}
\end{enumerate}

By the definition of $w_k$ and $W_1$, we must have $w_k=N$, $w_{k+s+1}=E$, and $w_{k-r-1}=N$ if $k-r-1\geq 1$. Then it is not difficult to verify that
$$\mathrm{maj}(W)=\left\{
           \begin{array}{ll}
             \mathrm{maj}(\varphi(w))+1, & \hbox{if $E<N$;} \\
             \mathrm{maj}(\varphi(w)), & \hbox{if $E>N$.}
           \end{array}
         \right.
$$
For instance, if $E<N<D$ and $r=0$, then $D(\varphi(W))=D(W)-\{k+s\}\cup\{k-s-1\}$, which implies that $\mathrm{maj}(W)=\mathrm{maj}(\varphi(W))+1$.

Thus to complete the proof of Lemma \ref{lem:21}, it is enough to show that $\varphi$ is a bijection. Given a path $W=w_1w_2\cdots w_l\in Del(n+1,n-1,l)$, let $w_{k-1}$ be the last of its deepest step, where the depth of $w_0$ is defined to be 0. Then we must have $w_k=E$. We denote by
$$W_1:=w_{k-r}w_{k-r+1}\cdots w_{k-1}w_kw_{k+1}\cdots w_{k+s}$$
the subpath of $W$ such that $w_{k-r-1}\neq D$, $w_{k+s+1}\neq D$, and $w_{k-i}=w_{k+j}=D$ for $1\leq i\leq r$ and $1\leq j\leq s$. Then the path $\varphi^{-1}(W)$ is constructed as follows.
\begin{enumerate}[(1)]
  \item If $E<D<N$, $N<E<D$ or $D<N<E$, $\varphi^{-1}(W)$ is obtained from $W$ by replacing $w_k$ with $N$.
  \item If $E<N<D$, $D<E<N$ or $N<D<E$, $\varphi^{-1}(W)$ is obtained from $W$ according to the following two cases:
  \begin{itemize}
    \item when $s\geq 1$, replacing $W_1$ with the path $\underbrace{DD\cdots D}_{r+1}N\underbrace{DD\cdots D}_{s-1}$;
    \item when $s=0$, replacing $W_1$ with the path $N\underbrace{DD\cdots D}_{r}$.
  \end{itemize}
\end{enumerate}

It is obvious that the above construction gives the inverse of $\varphi$, thus we completes the proof.
\end{proof}

We are now ready to give the proof of Theorem \ref{thm:11}.
\begin{proof}
By a result of MacMahon(\cite{MacMahon}) on the distribution of maj over all permutations of a multiset, we have
$$MDel(m,n,l;q)=\qbc{l}{l-m,l-n,m+n-l}.$$
It is obvious that
$$MSch_{L}(n,l;q)=MDel(n,n,l;q)-MBDel(n,n,l;q).$$
Therefore by Lemma \ref{lem:21}, for the case when $E<N$, we have
\begin{equation*}
\begin{aligned}
MSch_{L}(n,l;q)&=\qbc{l}{l-n,l-n,2n-l}-q\qbc{l}{l-n-1,l-n+1,2n-l}\\
&=\frac{1}{[l-n+1]}\qbc{2(l-n)}{l-n}\qbc{l}{2n-l},
\end{aligned}
\end{equation*}
 and for the case when $E>N$, we have
\begin{equation*}
\begin{aligned}
MSch_{L}(n,l;q)&=\qbc{l}{l-n,l-n,2n-l}-\qbc{l}{l-n-1,l-n+1,2n-l}\\
&=\frac{q^{l-n}}{[l-n+1]}\qbc{2(l-n)}{l-n}\qbc{l}{2n-l}.
\end{aligned}
\end{equation*}
\end{proof}




\begin{thebibliography}{00}
\bibitem{Bonin-Shapiro-Simion}
Bonin J, Shapiro L, Simion R. Some q-analogues of the Schr\"{o}der numbers arising from combinatorial statistics on lattice paths[J]. Journal of Statistical Planning and Inference, 1993, 34(1): 35-55.
\bibitem{F¨¹rlinger-Hofbauer}
F\"{u}rlinger J, Hofbauer J. q-Catalan numbers[J]. Journal of Combinatorial Theory, Series A, 1985, 40(2): 248-264.
\bibitem{MacMahon}
MacMahon P A. Combinatory analysis(Vol. 2)[M]. Cambridge University Press, Cambridge, 1918. Reprinted by Chelsea, New York,
 1960.
\end{thebibliography}



\noindent{\emph{Address}: School of Mathematics and Computational Science, Hunan University of Science and Technology, Xiangtan 411201, China.}\\
\noindent{\emph{E-mail address}: xmchen@hnust.edu.cn}
\end{document}